 \theoremstyle{definition}
 \newtheorem{theorem}{Theorem}%[section]
 \theoremstyle{remark}
 \newtheorem{example}{Example}
 \DeclareMathOperator{\Cov}{Cov}
 \DeclareMathOperator{\Co}{co}
 \DeclareMathOperator{\Derivative}{D}
 \DeclareMathOperator{\E}{E} 
 \DeclareMathOperator{\Image}{im} 
 \DeclareMathOperator{\Kexp}{\exp_{\kappa}}
 \DeclareMathOperator{\Phiexp}{\exp_{\phi}}
 \DeclareMathOperator{\Philn}{\ln_{\phi}}
 \DeclareMathOperator{\Span}{Span}
 \newcommand{\convexof}[1]{\Co\left(#1\right)}
 \newcommand{\covat}[3]{\Cov_{#1}\left(#2,#3\right)}
 \newcommand{\derivby}[1]{\frac{d}{d#1}}
 \newcommand{\dirderat}[3]{\Derivative  {#1} \left(#2\right) #3}
 \newcommand{\dirder}[2]{\Derivative  {#1} #2}
 \newcommand{\expectat}[2]{{\E}_{#1}\left[#2\right]}
 \newcommand{\expof}[1]{\exp\left(#1\right)}
 \newcommand{\kexpof}[1]{\Kexp\left(#1\right)}
 \newcommand{\phiexpof}[1]{\Phiexp\left(#1\right)}
 \newcommand{\philnof}[1]{\Philn\left(#1\right)}
 \newcommand{\reals}{\mathbb{R}}
 \newcommand{\setof}[2]{\left\{#1 \colon #2 \right\}}
 \newcommand{\set}[1]{\left\{#1\right\}}
 \newcommand{\spanof}[1]{\Span\left(#1\right)}
 \newcommand{\sspace}[1]{\mathcal #1}
\journal{EES}
\begin{document}

\begin{frontmatter}

%% Title, authors and addresses

%% use the tnoteref command within \title for footnotes;
%% use the tnotetext command for the associated footnote;
%% use the fnref command within \author or \address for footnotes;
%% use the fntext command for the associated footnote;
%% use the corref command within \author for corresponding author footnotes;
%% use the cortext command for the associated footnote;
%% use the ead command for the email address,
%% and the form \ead[url] for the home page:
%%
%% \title{Title\tnoteref{label1}}
%% \tnotetext[label1]{}
%% \author{Name\corref{cor1}\fnref{label2}}
%% \ead{email address}
%% \ead[url]{home page}
%% \fntext[label2]{}
%% \cortext[cor1]{}
%% \address{Address\fnref{label3}}
%% \fntext[label3]{}

\title{Marginal Polytope of a Deformed Exponential Family}

%% use optional labels to link authors explicitly to addresses:
%% \author[label1,label2]{<author name>}
%% \address[label1]{<address>}
%% \address[label2]{<address>}

\author{Giovanni Pistone}

\address{Collegio Carlo Alberto, Via Real Collegio 30, 10024 Moncalieri, Italy}

\begin{abstract}
A deformed logarithm function called $q$-logarithm has received considerable attention by physicist after its introduction by C. Tsallis. J. Naudts has proposed a generalization called $\phi$-logarithm and he has derived the basic properties of $\phi$-exponential families. In this paper we study the related notion of marginal polytope in the case of a finite state space.  
\end{abstract}

\begin{keyword}
$\phi$-logarithm \sep $\phi$-exponential family \sep marginal polytope

\end{keyword}

\end{frontmatter}

% \linenumbers

\section{Introduction}

In Sec. \ref{sec:background} we review the basic properties of deformed exponential families as they are discussed in \citet{naudts:2011GTh}. We do not give here detailed references and refer to the bibliographical notes in this monograph. The presentation of exponential families we use is non-parametric as in \citet{pistone:2009EPJB}. In Sec. \ref{sec:marginal-polytope} we give our generalization of the notion of marginal polytope. The standard version is in \citet{brown:86}. 

\section{Background}
\label{sec:background}

The deformed exponential function are usually introduced starting from the logarithm, see e.g. \citet[Ch. 10]{naudts:2011GTh}, with the aim to define a generalization of the entropy. Let $\phi \colon \reals_> \to \reals_>$ be a positive, increasing, absolutely continuous function. The $\phi$-\emph{logarithm} is the function $\philnof v = \int_1^v {dy}/{\phi(y)}$, $v \in \reals_>$, which reduces to ordinary logarithm when $\phi(v) = v$.

The deformed logarithm $\Philn$ is defined on $\reals_>$ and it is strictly increasing and concave. In the following we assume $-\int_{-\infty}^1 \frac{dy}{\phi(y)}=\int_1^{+\infty} \frac{dy}{\phi(y)} = +\infty$ so that the range of $\Philn$ is the full real line $\reals$.

The $\phi$-\emph{exponential} or \emph{deformed exponential} is the inverse function $\Phiexp = \Philn^{-1}$. It is increasing and convex and solves the differential equation $y'=\phi(y)$, $y(0) = 1$. It is convenient to introduce the rate function
\begin{equation*}
  \psi(u) = \frac{\phi(\phiexpof u)}{\phiexpof u},
\end{equation*}
so that we have
\begin{equation*}
  \Phiexp'(u) = \phi(\phiexpof u) = \psi(u) \phiexpof u.
\end{equation*}

Viceversa, given a positive absolutely continuous rate function $\psi \colon \reals \to \reals_>$, such that $\int_{-\infty}^0 \psi(x) dx = \int_{0}^{+\infty} \psi(x) dx = +\infty$ and $\psi'+\psi^2 \ge 0$, the solution of the differential equation $y'(u)=\psi(u)y(u)$, $y(0)=1$, is a deformed exponential $\Phiexp$ whose $\phi$-function is
\begin{equation*}
  \phi(v)=\psi(\philnof v), \quad v\in\reals_>.
\end{equation*}
This deformed exponential is \emph{self-dual}, $\phiexpof{u}\phiexpof{-u}=1$, if, and only if, the rate function $\psi$ is symmetric because
\begin{multline*}
\derivby u {\phiexpof{u}\phiexpof{-u}} = \\ \psi(u)\phiexpof{u}\phiexpof{-u}-\psi(-u)\phiexpof{u}\phiexpof{-u} = \\ \left(\psi(u)-\psi(-u)\right)\phiexpof{u}\phiexpof{-u} .
\end{multline*}

\begin{example}[Kaniadakis]
The deformed exponential defined in \citet{kaniadakis:2001PhA} with parameter is
\begin{equation*}
  \kexpof u = \expof{\int_0^u \frac{dy}{\sqrt{1+\kappa^2 y^2}}}, \quad u \in \reals, \quad \kappa \in [0,1[.
\end{equation*}
It has
\begin{equation*}
  \psi(y) = (1+\kappa^2 y^2)^{-1/2},\quad
 \phi(x) = \frac{2x}{x^\kappa+x^{-\kappa}}.
\end{equation*}
\end{example}

\begin{example}[Tsallis]
The function $\phi(y)=y^q$ does not satisfy our restrictive assumptions and has been used by Tsallis in 1994 to define his deformed logarithm. Given a symmetric function $\sigma$, then $\phi(y)=y\sigma(y^q,y^{-q})$ is self-dual. Kaniadakis' logarithm is an example, with $\sigma(s,t)=2/(s+t^{-1})$.
\end{example}

Let $(\sspace X,\mu)$ be a finite sample space with reference measure $\mu$. We denote by $L$ the vector space of real random variables on $\sspace X$. If $p$ is any probability density of the sample space, $L_0(p)$ denotes the vector space of $p$-centered random variable, i.e. $u\in L$ belongs to $L_0(p)$ if $\expectat p u = \sum_{x\in\sspace X} u(x) p(x) \mu(x) = 0$. Given a density $p$ and random variables $H_j\in L$, $j=1,\dots,m$, the $\phi$-\emph{exponential family} of the \emph{statistics} $H_j$ is the parametrized set of densities
\begin{equation}
\label{eq:phiexppar}
  p_\theta(x) = \phiexpof{\sum_{j=1}^m \theta_j H_j(x) - \alpha(\theta)}p(x), \quad \theta\in \reals^m,
\end{equation}
where $\alpha(\theta)$ is characterized by the normalization condition. In fact, the function
\begin{equation*}
  \alpha \mapsto \expectat p {\phiexpof{\sum_{j=1}^m \theta_j H_j - \alpha}}
\end{equation*}
is continuous and decreasing from $+\infty$ to 0, so that for each $\theta\in\reals^m$ there exists a unique value $\alpha(\theta)$ such that $p_\theta$ in Eq. \eqref{eq:phiexppar} is a density. 

Two different sets of statistics $H_j$, $j=1,\dots,m$ and $H'_j$, $j=1,\dots,m'$ define the same statistical model if, and only if, the vector space generated by the centered random variables is the same,
\begin{equation*}
  \spanof{H_j - \expectat p {H_j},j=1,\dots,m} = \spanof{H'_j - \expectat p {H'_j}, j=1,\dots,m'}.
\end{equation*}

According to \citet{pistone:2009EPJB}, it is more canonical to define the $\phi$-exponential model as the set of densities $p_u$ of the form
\begin{equation}\label{eq:kexpmodel}
 p_u = \phiexpof{u - K(u)} p, \quad u \in V,
\end{equation}
where $V=\spanof{H_j - \expectat p {H_j},j=1,\dots,m}$ is a linear sub-space of $L_0(p)$ and
\begin{equation}\label{eq:u-theta}
  K(u) = \alpha(\theta) - \sum_{j=1}^m \theta_j\expectat p {H_j}, \quad u = \sum_{j=1}^m \theta_j\left(H_j - \expectat p {H_j}\right) 
\end{equation}

The quantity $K(u)$ is in fact the divergence of $p$ from $p_u$ because from $u - K(u) = \philnof{\frac{q}{p}}$, $\expectat p u = 0$, and the self-duality of the deformed logarithm, we have
\begin{equation*}
  K(u) = \expectat p {\philnof{\frac{p}{q}}}.
\end{equation*}
The random variable $u\in V$ is a unique parameterization of $p_u$ as
\begin{equation*}
u = \philnof{\frac{q}{p}} - \expectat p {\philnof{\frac{q}{p}}}.   
\end{equation*}

The non parametric derivative of the mapping $L_0 \colon u \mapsto K(u)$ is obtained by derivation of the expected value of Eq. \eqref{eq:kexpmodel} in the direction $v\in L_0$. We use the notation $\dirderat  K u v = \left.\derivby t K(u+tv)\right|_{t=0}$. As $\expectat \mu {p_u} = \expectat p {\phiexpof{u - K(u)}} = 1$, we obtain
\begin{multline*}
  \expectat p {\dirder {\phiexpof{u - K(u)}}v} = \\ \expectat p {\psi(u - K(u))\phiexpof{u - K(u)} \dirder{(u-K(u))}{v}} = 0.
\end{multline*}
As we have
\begin{equation*}
  \psi(u - K(u)) = \psi\left(\philnof{\frac{p_u}{p}}\right) = \phi\left(\frac{p_u}{p}\right)
\end{equation*}
we can write
\begin{equation*}
  \expectat {p_u} {\phi\left(\frac{p_u}{p}\right) v} = \expectat {p_u} {\phi\left(\frac{p_u}{p}\right)} \dirderat K u v. 
\end{equation*}

The probability density
\begin{equation*}
  p_{\phi,u} = \frac{\phi\left(\frac{p_u}{p}\right)}{\expectat {p_u} {\phi\left(\frac{p_u}{p}\right)}} p_u
\end{equation*}
is called the \emph{escort density} and we have $\dirderat K u v = \expectat {\phi,u} v$.

The second derivative of $u \mapsto \phiexpof{u - K(u)}$ in the directions $v$ and $w$ is the first derivative in the direction $w$ of $u \mapsto \Phiexp'(u - K(u))(v - DK(u) v)$, therefore it is equal to
\begin{equation}
  \label{eq:30}
\Phiexp''(u - K(u))(v-DK(u)v)(w-DK(u)w) -\Phiexp'(u - K(u)) D^2K(u) vw.
\end{equation}
The random variable in Eq. \eqref{eq:30} has zero $p$-expectation, so that 
\begin{equation*}
  D^2K(u) vw = \frac{\expectat p {\Phiexp''(u - K(u))(v-DK(u)v)(w-DK(u)w)}}{\expectat p {\Phiexp'(u - K(u))}}.
\end{equation*}
If $w=v\ne0$, then $D^2K(u)vv > 0$, therefore the functional $K$ is strictly convex. For $u=0$ we obtain $D^2 K(0) vw = \covat p u v$. We do not have a similar interpretation for $u \ne 0$, but see the discussion of the parallel transport in \citet[sec. 4]{pistone:2009EPJB} and of conformal transformations in \citet{ohara|amari:2011}.

The convex conjugate of $L_0(p) \colon u \mapsto K(u)$, is defined in the duality $(u^*,u) \mapsto \expectat p {u^* u}$ by
\begin{equation}
  \label{eq:conjugateK}
  H(u^*) = \sup\setof{\expectat p {u^* u} - K(u)}{u\in L_0(p)}, \quad u^* \in L_0(p).
\end{equation}

The function $L_0(p) \ni u \mapsto \expectat p {u^* u} - K(u)$ is concave and has derivative in the direction $v$ equal to
\begin{equation*}
  \expectat p {u^* v} - \expectat{\phi,u}{v} =  \expectat p {u^* v} - \expectat{p}{\frac{p_{\phi,u}}p v}.
\end{equation*}
 
If we have a maximum at $\hat u$ in Eq. \eqref{eq:conjugateK}, then $\hat u$ satisfies $p_{\phi,\hat u} = u^* p$. 

For the model in Eq. \eqref{eq:kexpmodel} we define
\begin{equation}
  \label{eq:conjugateKV}
  H_V(u^*) = \sup\setof{\expectat p {u^* u} - K(u)}{u\in V}, \quad u^* \in L_0(p).
\end{equation}
In this case the directional derivative has to be zero for all $v\in V$ and a maximum at $\hat u$ implies that $u^*$ has orthogonal projection on $V$ equal to that of $p_{\phi,\hat u}/p$.

\section{Marginal polytope}
\label{sec:marginal-polytope}

In this section we consider the $\phi$-exponential family of Eq. \eqref{eq:phiexppar} together with its non-parametric presentation, where we have a vector space $V$ of $p$-centered random variables and the statistical model of Eq. \eqref{eq:kexpmodel}. The random variables $H_j-\expectat p {H_j}$, $j=1,\dots,m$, span the space $V\subset L_0(p)$ and the relation beween the two parameterization $\theta\in \reals^m$ and $u\in V$ are shown in \eqref{eq:u-theta}. The \emph{marginal polytope} of the $\phi$-model (also called \emph{convex support}) is the convex hull $M$ of the set $\setof{H(x)}{x \in \sspace X} \subset R^m$, $H=(H_1,\dots,H_m)$. The function $\alpha \colon \reals^m \to \reals$ is convex and has a convex conjugate $\alpha^*(\eta) = \sup\setof{\theta\cdot\eta-\alpha(\theta)}{\theta\in\reals^m}$.

The following theorem is the deformed version of classical results e.g, \cite[Th. 3.6]{brown:86}. We use notions of convex analysis to be found in \citet{rockafellar:1970}.
\begin{theorem}\label{th:M}
\begin{enumerate}
\item \label{item:1} The convex conjugate $\alpha^* \colon \reals^m \to \reals\cup\set{+\infty}$ of $\alpha$ is finite exactly on the marginal polytope $M = \convexof{\Image H}$.
\item \label{tem:2} The gradient mapping $\nabla \alpha \colon \reals^m \to \reals^m$ is onto the interior $M^\circ$ of the marginal polytope $M$.
\item \label{item:3} $\alpha^*$ restricted to $M^\circ$ is the Legendre transform of $\alpha$ that is, $\alpha^*(\eta) = \theta\cdot\eta - \alpha(\theta)$ if $\eta = \expectat {\phi,\theta} H$. 
\end{enumerate}
\end{theorem}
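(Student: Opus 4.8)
The plan is to reduce all three statements to two elementary growth bounds on the implicitly defined normalizer $\alpha$, after which the conclusions follow from standard convex analysis \citet{rockafellar:1970}. I will use that $\alpha$ is finite on all of $\reals^m$ (we take $p>0$ on $\sspace X$), convex, and hence continuous and closed; that $M=\convexof{\Image H}$ is a compact polytope with support function $s_M(\theta)=\max_{x\in\sspace X}\theta\cdot H(x)$; and that the differentiation of the normalization that produced the identity $\dirderat K u v=\expectat{\phi,u}v$ yields, in the $\theta$-parametrization, the gradient formula $\nabla\alpha(\theta)=\expectat{\phi,\theta}H$. The two bounds come directly from $\sum_{x\in\sspace X}\phiexpof{\theta\cdot H(x)-\alpha(\theta)}p(x)\mu(x)=1$. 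Since each summand is at most $1$ and $\Phiexp$ is increasing, $\theta\cdot H(x)-\alpha(\theta)\le\philnof{\frac{1}{p(x)\mu(x)}}=:c_x$, i.e.\ $\alpha(\theta)\ge\theta\cdot H(x)-c_x$ for every $x$; and, bounding every argument by $s_M(\theta)-\alpha(\theta)$ and using $\sum_x p(x)\mu(x)=1$, one gets $\phiexpof{s_M(\theta)-\alpha(\theta)}\ge1$, i.e.\ $\alpha(\theta)\le s_M(\theta)$.

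Statement \ref{item:1} follows at once. If $\eta=\sum_x\lambda_xH(x)\in M$ is a convex combination, the lower bound gives $\theta\cdot\eta-\alpha(\theta)\le\sum_x\lambda_xc_x$ uniformly in $\theta$, so $\alpha^*(\eta)\le\sum_x\lambda_xc_x<+\infty$. If instead $\eta\notin M$, separating the point from the compact convex set $M$ furnishes a direction $\delta$ with $\delta\cdot\eta>s_M(\delta)$; evaluating along $\theta=t\delta$ and using the upper bound, $t\delta\cdot\eta-\alpha(t\delta)\ge t\,(\delta\cdot\eta-s_M(\delta))\to+\infty$, so $\alpha^*(\eta)=+\infty$. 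Hence $\alpha^*$ is finite exactly on $M$.

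For statement \ref{tem:2}, the gradient formula shows $\nabla\alpha(\theta)=\expectat{\phi,\theta}H$ is a convex combination of the points $H(x)$ with the strictly positive escort weights $p_{\phi,\theta}(x)\mu(x)$, hence lies in $M^\circ$; the image is therefore contained in $M^\circ$. For surjectivity, fix $\eta\in M^\circ$ and minimize the convex continuous function $\theta\mapsto\alpha(\theta)-\theta\cdot\eta$. The lower bound gives $\alpha(\theta)-\theta\cdot\eta\ge\max_x\theta\cdot(H(x)-\eta)-\max_xc_x$, and because $\eta\in M^\circ$ the origin lies in the interior of $\convexof{\setof{H(x)-\eta}{x\in\sspace X}}$, so $\max_x\theta\cdot(H(x)-\eta)\ge c\lvert\theta\rvert$ for some $c>0$; the objective is thus coercive and attains its minimum at some $\hat\theta$ with $\nabla\alpha(\hat\theta)=\eta$. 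This proves $\nabla\alpha$ is onto $M^\circ$, and statement \ref{item:3} is the by-product: at $\hat\theta$ the supremum defining $\alpha^*(\eta)$ is attained, whence $\alpha^*(\eta)=\hat\theta\cdot\eta-\alpha(\hat\theta)$ with $\eta=\nabla\alpha(\hat\theta)=\expectat{\phi,\hat\theta}H$, which is exactly the asserted Legendre transform.

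The main obstacle is that, unlike the classical log-partition function, here $\alpha$ is available only implicitly through a general deformed exponential, so Laplace's method is unavailable; the whole argument hinges on extracting the two linear bounds $\theta\cdot H(x)-c_x\le\alpha(\theta)\le s_M(\theta)$ from the normalization using nothing but the monotonicity and the unbounded range of $\Phiexp$. A secondary delicate point is the attainment in statement \ref{tem:2}: interiority of $\eta$ must be upgraded to genuine coercivity via the positive-spanning estimate above, and for $M^\circ$ to be the topological (rather than merely relative) interior one should assume the statistics are minimal, i.e.\ the $H_j-\expectat p{H_j}$ are linearly independent, so that $M$ is full-dimensional.
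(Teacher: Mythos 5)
Your proposal is correct, and all three conclusions are reached by sound arguments; the overall skeleton (separation for $\eta\notin M$, the escort-gradient identity $\nabla\alpha(\theta)=\expectat{\phi,\theta}{H}$ for the forward inclusion, coercivity plus attainment for surjectivity, and the attained supremum for the Legendre identity) coincides with the paper's. What is genuinely different is the organization around the two-sided sandwich $\max_x\left(\theta\cdot H(x)-c_x\right)\le\alpha(\theta)\le s_M(\theta)$, extracted from the normalization using only the monotonicity of $\Phiexp$. The paper instead proves finiteness on $M$ via the convexity of $\Phiexp$ (a Jensen step giving $\phiexpof{\theta\cdot\eta-\alpha(\theta)}\le C$ with $C=\max_x\lambda(x)/\mu(x)$), whereas your version needs only the linearity of $\theta\cdot\eta$ in the convex weights together with the per-point bound $\theta\cdot H(x)-\alpha(\theta)\le c_x$, so it is more elementary and would survive for any increasing normalizing transform. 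Your upper bound $\alpha\le s_M$ turns the divergence off $M$ into a one-line homogeneity computation, and, conceptually, the sandwich says $\alpha$ stays within the fixed distance $\max_x c_x$ of the support function $s_M$, from which item \ref{item:1} also follows abstractly since the conjugate of $s_M$ is the indicator of $M$. For surjectivity, your estimate $\max_x\theta\cdot(H(x)-\eta)\ge c\lvert\theta\rvert$ is exactly the uniform version of the paper's per-direction device (for each unit $u$ a point $x_u$ with $u\cdot(H(x_u)-\eta)>\epsilon$); your version is tighter on a point the paper glosses, namely that the additive constant $\philnof{1/\mu(x_u)}$ there depends on the direction through $x_u$ and must be bounded using finiteness of $\sspace X$, which your single constant $\max_x c_x$ handles automatically. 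You also keep the factor $p(x)\mu(x)$ in the normalization, matching Eq.~\eqref{eq:phiexppar}, where the paper's displayed normalizations silently drop $p$. Two small caveats: your handling of the degenerate case by assuming minimal (linearly independent) statistics is weaker than the theorem as stated, which the paper covers by the sketched affine reduction to a linearly independent subset of the $H_j$ --- you should either add that reduction or state minimality as a hypothesis; and the forward inclusion that a strictly positive convex combination of the $H(x)$ lies in $M^\circ$ uses precisely this full-dimensionality (otherwise one only gets the relative interior), a point you correctly flag but the paper leaves implicit as well.
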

\begin{proof}
\begin{enumerate}
\item Assume first $\eta \in M$, namely $\eta = \sum_{x} \lambda(x) H(x)$, $\lambda(x) \ge 0$, $\sum_x \lambda(x)=1$, and  $H(x)=(H_1(x),\dots,H_m(x))$. From the convexity of the $\Phiexp$ function, we obtain
  \begin{multline*}
\phiexpof{\theta\cdot\eta-\alpha(\theta)} = \phiexpof{\sum_x \lambda(x)\left(\theta\cdot H(x)-\alpha(\theta)\right)} \le \\ \sum_x \lambda(x) \phiexpof{\theta\cdot H(x)-\alpha(\theta)}
  \end{multline*}
As $\mu(x) > 0$, $x \in \sspace X$, $C = \max_x \lambda(x)/\mu(x)$ is finite and $\lambda(x) \le C \mu(x)$, therefore
\begin{equation*}
\phiexpof{\theta\cdot\eta-\alpha(\theta)} \le C \sum_x \mu(x) \phiexpof{\theta\cdot H(x)-\alpha(\theta)} = C,
\end{equation*}
so that $\theta\cdot\eta-\alpha(\theta) \le \philnof C$ for all $\theta$.

Assume now $\eta \notin M$. As $M=\convexof{T(x) \colon x\in\sspace X}$ is a compact convex set, $\eta$ is strictly separated from $M$, that is, there exist an affine function $t \mapsto a \cdot t - a_0$, $a=(a_1,\dots,a_m)$, such that $a \cdot H(x) \le a_0$, $x \in \sspace X$, and $a\cdot\eta = a_0+1$. Along the sequence $\theta_n=na$, $n=1,2,\dots$, we have
\begin{equation*}
  1 = \sum_x \phiexpof{\theta_n\cdot H(x) - \alpha(\theta_n)}\mu(x) \le \phiexpof{na_0 - \alpha(\theta_n)}, 
\end{equation*}
so that
\begin{equation*}
  0 \le na_0 - \alpha(\theta_n) = \theta_n\cdot \eta - n - \alpha(\theta_n),
\end{equation*}
which in turn implies $\theta_n\cdot\eta - \alpha(\theta_n) \to + \infty$ as $n \to \infty$.
\item Each $\eta = \nabla \psi(\theta)$ is of is an expected value with respect to the escort density, $\eta = \expectat {\phi,\theta} H$, therefore $\eta \in M^\circ$ because the escort density is strictly positive. Viceversa, if $\eta \in M^\circ$, let us assume first that the elements of $H$ are linearly independent so that the convex set $M$ is solid. In such a case, there exists a positive constant $\epsilon > 0$ such that $\eta + \epsilon u \in M^\circ$ for all unit vector $u$. Consider a tangent hyperplane $T_u(y)=0$ of the $\epsilon$-ball centered at $\eta$, orthogonal to $u$, $T_u(y) = u\cdot(y-\eta)-\epsilon$. It follows that $T(\eta)= -\epsilon < 0$ and moreover there exists at least one $x_u \in \sspace X$ such that $T(H(x_u)) > 0$, that is $\theta\cdot(H(x_u)-\eta) > \rho\epsilon$ for $\theta=\rho u$, $\rho\in\reals$. We have
  \begin{equation*}
1 = \sum_x \phiexpof{\theta\cdot H(x) - \alpha(\theta)} \mu(x) \ge \phiexpof{\theta\cdot H(x_u) - \alpha(\theta)} \mu(x_u),
  \end{equation*}
hence
\begin{equation*}
  \philnof{\frac1{\mu(x_u)}} \ge \theta\cdot H(x_u) - \alpha(\theta) > \theta\cdot\eta + \rho\epsilon - \alpha(\theta).  
\end{equation*}
therefore, for $\rho \to +\infty$,
\begin{equation*}
  \theta\cdot\eta - \alpha(\theta) < -\rho \epsilon \to -\infty.
\end{equation*}
The maximum of $\theta \mapsto \theta\cdot\eta - \alpha(\theta)$ is reached at some point $\hat \theta$ that satisfies $\eta = \nabla \alpha(\hat\theta)$. 

The general case is obtained by considering a linear independent subset of $H_j$ and expressing the marginal polytope as an affine function of the reduced marginal polytope.
\item From the convexity of $\alpha$ it follows
  \begin{equation*}
    \alpha(\theta) - \alpha(\hat\theta) \ge \nabla \alpha(\hat\theta)\cdot(\theta - \hat\theta) = \eta\cdot(\theta - \hat\theta). 
  \end{equation*}
By rearranging the terms,
\begin{equation*}
  \eta\cdot\hat\theta - \alpha(\hat\theta) \ge \eta\cdot\theta - \alpha(\theta),
\end{equation*}
therefore, $\alpha^*(\eta) = \eta\cdot\hat\theta - \alpha(\hat\eta)$.
\end{enumerate}
\end{proof}

The non parametric version of item \ref{item:1} of Th. \ref{th:M} follows. We are not discussing here the other two items.
\begin{theorem}\label{th:Mnp}
The convex conjugate $H_V$ of $K$ is finite at $u^*$ if, and only if, $(u^*+1)p$ is a density, that is $\expectat p {u^*} = 0$, $u^*+1 \ge 0$.
\end{theorem}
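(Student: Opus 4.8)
The plan is to mirror the proof of item \ref{item:1} of Theorem \ref{th:M}, proving the two implications separately, with the measure $q=(u^*+1)p$ playing the role that the point $\eta$ of the marginal polytope plays in the parametric statement. Throughout I take the supremum defining the conjugate over all of $L_0(p)$ (the saturated model $V=L_0(p)$, for which $H_V=H$), so that the densities $p_u$ range over all strictly positive densities. Two observations drive everything. First, since $u^*\in L_0(p)$ the normalization $\expectat p{u^*}=0$ is automatic, so the only substantive part of ``$(u^*+1)p$ is a density'' is the inequality $u^*+1\ge 0$. Second, for every $u\in L_0(p)$ one has $\expectat p{u^*u}=\expectat q u$ and, because $q$ has total mass $1$, $\expectat p{u^*u}-K(u)=\expectat q{u-K(u)}=\expectat q{\philnof{p_u/p}}$, using $\philnof{p_u/p}=u-K(u)$ from Eq. \eqref{eq:kexpmodel}. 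Thus the conjugate is controlled by the $q$-average of $\philnof{p_u/p}$.

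For the \emph{sufficiency} (if $(u^*+1)p$ is a density then the conjugate is finite) I would argue exactly as in the $\eta\in M$ case. Set $C=\max_{x}(u^*(x)+1)$, which is finite and positive. By the convexity of $\Phiexp$ and Jensen's inequality with respect to the probability density $q$, $\phiexpof{\expectat q{u-K(u)}}\le\expectat q{\phiexpof{u-K(u)}}=\expectat q{p_u/p}=\expectat{p_u}{(u^*+1)}\le C$, the last step because $u^*+1\le C$ pointwise and $p_u$ is a density. Applying the increasing function $\Philn$ gives $\expectat q{u-K(u)}\le\philnof C$ for every $u$, hence the conjugate is at most $\philnof C<\infty$.

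For the \emph{necessity} I argue by contraposition, mirroring the $\eta\notin M$ case. If $(u^*+1)p$ is not a density then, the total mass being $1$, there must be a point $x_0\in\sspace X$ with $u^*(x_0)+1<0$. I would then exhibit a family of admissible densities $q_t=p_{u_t}$, $t\in(0,1]$, that starve the point $x_0$: take $q_t$ proportional to $p$ off $x_0$ with $q_t(x_0)/p(x_0)=t$ and the remaining mass renormalized, so that $q_t(x)/p(x)$ stays bounded and bounded away from $0$ for $x\ne x_0$. Then $\expectat p{u^*u_t}-K(u_t)=\expectat q{\philnof{q_t/p}}$, whose $x_0$-term equals $q(x_0)\mu(x_0)\philnof t$; since $q(x_0)\mu(x_0)<0$ and $\philnof t\to-\infty$ as $t\to0^+$ (the standing assumption that $\Philn$ has range all of $\reals$), this term tends to $+\infty$ while every other term stays bounded. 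Hence the supremum is $+\infty$.

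The main obstacle is the necessity direction: identifying that the failure of the density condition localizes to a single point $x_0$ where $u^*+1<0$, and then designing the blow-up family $q_t$ so that the divergent $x_0$-contribution is not cancelled by the others. Conceptually this is where the symmetry with the sufficiency argument breaks: once $q$ acquires a negative part, Jensen's inequality no longer applies, and it is precisely this negative mass, paired with $\philnof{0^+}=-\infty$, that forces the conjugate to be infinite.
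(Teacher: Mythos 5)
Your proof is correct, but it takes a genuinely different route from the paper's. The paper proves Theorem \ref{th:Mnp} by reduction to Theorem \ref{th:M}: using Eq. \eqref{eq:u-theta} it computes, for $u\in V$, $\expectat p {u^*u}-K(u)=\eta\cdot\theta-\alpha(\theta)$ with $\eta_j=\expectat p {(u^*+1)H_j}$, so that $H_V(u^*)=\sup_\theta(\eta\cdot\theta-\alpha(\theta))=\alpha^*(\eta)$, which by item \ref{item:1} of Theorem \ref{th:M} is finite exactly when $\eta\in M$; it then identifies $\eta\in M$ with $(u^*+1)p$ being a density. You instead re-run the Theorem \ref{th:M} argument directly in nonparametric form, via the identity $\expectat p{u^*u}-K(u)=\expectat q{\philnof{p_u/p}}$ with $q=(u^*+1)p$: for sufficiency, Jensen's inequality with respect to the probability $q$ plus the pointwise bound $u^*+1\le C$ replaces the paper's bound via $C=\max_x \lambda(x)/\mu(x)$; for necessity, instead of the separating-hyperplane ray $\theta_n=na$, you build a curve of strictly positive densities $q_t$ starving the point $x_0$ where $u^*(x_0)+1<0$, so the negative mass paired with $\philnof t\to-\infty$ drives the supremum to $+\infty$. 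Both implications check out (in the saturated case every strictly positive density is some $p_u$, so your family is admissible, and the off-$x_0$ ratios $c_t$ indeed stay in a compact subset of $\reals_>$). The trade-off is instructive. The paper's reduction is two lines, but its final inference --- from $\eta\in\convexof{\Image H}$ to nonnegativity of the \emph{particular} weights $(u^*(x)+1)p(x)\mu(x)$ --- is valid only when the convex representation of $\eta$ is unique; for a proper subspace $V$ the ``only if'' direction fails as literally stated: on $\sspace X=\set{1,2,3}$ with counting $\mu$, uniform $p$, $H(1)=1$, $H(2)=H(3)=0$, the choice $u^*=(0,3,-3)$ gives $\eta=1/3\in M$, hence $H_V(u^*)<\infty$, although $u^*+1=-2<0$ at the third point. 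Your explicit restriction to the saturated model $V=L_0(p)$ (which you flag up front) is precisely the regime where the equivalence holds, and your blow-up construction supplies exactly the detail that the paper's closing sentence glosses over; what you give up is the general-$V$ information $H_V(u^*)=\alpha^*(\eta)$, which the paper's reduction yields for free.
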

\begin{proof}
From Eq. \eqref{eq:u-theta} we have for $u\in V$
\begin{align*}
  \expectat p {u^*u} - K(u) &= \sum_{j=1}^m \theta_j \expectat p {u^*(H_j-\expectat p {H_j})} - \alpha(\theta) + \sum_{j=1}^m \theta_j \expectat p {H_j} \\ &= \sum_{j=1}^m \theta_j \expectat p {(u^*+1)H_j} - \alpha(\theta) \\ &= \eta\cdot\theta - \alpha(\theta),
\end{align*}
where $\eta_j= \expectat p {(u^*+1)H_j}$, $j=1,\dots,m$. For such a $\eta$ we have $H_V(u^*) = \sup_\theta (\eta\cdot\theta - \alpha(\theta))$ which is finite if $\eta\in M$. As $\eta=\sum_x (u^*(x)+1)H(x)p(x)$, $(u^*+1)p$ must be a density. 
\end{proof}

\section{Discussion}

We have shown in Th. \ref{th:M} that the basic properties of the marginal polytope carry over from the classical case to the deformed case. Moreover, we have provided in Th. \ref{th:Mnp} a non parametric definition of the marginal polytope. This should be relevant in the discussion of the variational properties of the related deformed entropies. The results are restricted to the case of finite state space, while the non parametric language prompts for a generalization in the direction of \citet{pistone|sempi:95}.

A new general non parametric approach has been used in \citet{vigelis|cavalcante:2011} to ease the discussion of the dependence of the chart from the reference density $p$. Each density $q$ is represented as $\phiexpof{u - K_\text{VC}(u) + \Philn p}$ with respect to $\mu$, $u\in L_0(\mu)$. In such a case we have $\Philn q = u - K_\text{VC}(u) + \Philn p$, that is $K_\text{VC}(u) = \expectat \mu {\Philn p - \Philn q}$ instead of $K(u)=\expectat \mu {\philnof{\frac{p}{q}}}$.

\section*{References}

\end{document}